\documentclass[a4paper, 12pt, oneside]{amsart}

\usepackage{amsmath}
\usepackage{amsthm}
\newtheorem{theorem}{Theorem}
\newtheorem{corollary}{Corollary}

\begin{document}
	\pagestyle{plain}
	\title{General Solution of Second Order Linear Ordinary Differential Equation}
	\author{Rajnish Kumar Jha}
	\email{rajnishkrjha.2020@gmail.com}
	
	\begin{abstract}
		In this paper we present a direct formula for the solution of the general second order linear ordinary differential equation as our main result such that the parameters required for the formula are determined using another differential equation, which happens to be a Riccati Equation. We also derive other results based on the main result which include special cases for the concerned differential equation with variable coefficients, formula for solution of concerned differential equation with constant coefficients and formula for the solution of the concerned differential equation with one complementary solution known.
	\end{abstract} 

	\maketitle

	\section[intro]{Introduction}
		
	The general second order linear ordinary differential equation \cite{AEM2} is given as 
		\begin{equation}
		y_{(2)} + B(x)y_{(1)} + C(x)y = R(x),
		\label{1}
		\end{equation}	
	where $ y_{(k)} $ represents $ k^{th} $ derivative of the dependent variable - $ y $ with respect to the independent variable $ x $ i.e. 
		\begin{equation*}
		 y_{(k)} = \frac{d^{k}y}{dx^k} ,
		\end{equation*}		
	$B(x), C(x)$ and $R(x)$ are functions of the independent variable $x$.
	
	When $R(x)=0$, Eqn.~\ref{1} represents a homogeneous second order linear ordinary differential equation and when $R(x) \neq 0$, it represents a non-homogeneous second order linear ordinary differential equation.

	When the coefficients $B(x)$ and $C(x)$ are constants then Eqn.~\ref{1} is known as second order linear ordinary differential equation with constant coefficients, otherwise it is known as second order linear ordinary differential equation with variable coefficients. (See \cite{AEM2})
	
	In this paper, we present a method to solve the general second order linear ordinary differential equation wherein we derive an expression for the solution having integrating factors involved and also determine another differential equation in this regard (which is a Riccati Equation \cite{AEM1}) such that when this equation can be solved, the solution of the corresponding second order differential equation can also be obtained.
	
	When the coefficients $B(x)$ and $C(x)$ are not constants then the differential equation given by Eqn.~\eqref{1} can be difficult to solve. However, under certain cases where the coefficients satisfy some conditions, we can determine the exact solution of such differential equation. Two such cases are described in this paper. (See Corollary-\ref{Col1} and Corollary-\ref{{Col2}} )
	
	We also derive a formula for the complete solution of the differential equation given by Eqn.~\eqref{1} provided we know one complementary solution of the given equation (See Corollary-\ref{Col4}). Moreover, when the coefficients of the differential equation given by Eqn.~\eqref{1} are constant then the complete solution can be determined exactly as it is discussed in this paper. (See Corollary-\ref{Col3} )
	
	The main result is stated as Theorem~\ref{Thm} while the other results are stated in form of its corollaries.

	\section[main-result]{Main Result}
	
	In this section we give the main result in form of a Theorem , which gives the solution expression for the differential equation given by Eqn.~\eqref{1} in terms of $R(x)$ and Integrating Factors. We present Theorem~\ref{Thm} along with the proof here as follows
	
	\begin{theorem}
		\label{Thm}
		The solution of second order linear ordinary differential equation, as described by Eqn. \eqref{1}, is given as		
		\begin{equation}		
		y = \frac{1}{h(x)}\!\!\int\!\!\frac{h(x)}{g(x)}\!\!\int\!\! g(x)R(x)\, dx dx + \frac{C_1}{h(x)}\!\!\int\!\!\frac{h(x)}{g(x)}\,dx + \frac{C_2}{h(x)} \label{2}		
		\end{equation} 
		where
		\begin{gather}
		g(x) = e^{\int P(x) \, dx},\label{3} \\
		h(x) = e^{\int Q(x) \, dx}, \label{4} \\
		P(x) + Q(x) = B(x), \label{5} \\
		Q'(x) = {Q(x)}^2 -B(x)Q(x) + C(x) \label{6},
		\end{gather}
		$Q'(x)$ represents first order derivative of $Q(x)$ with respect to $x$ and $g(x)$ and $h(x)$ are the first and second Integrating Factors respectively.
	\end{theorem}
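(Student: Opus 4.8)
The plan is to factor the second-order differential operator in Eqn.~\eqref{1} as a composition of two first-order operators. Writing the left-hand side as $Ly = y_{(2)} + B(x)y_{(1)} + C(x)y$, I would look for functions $P(x)$ and $Q(x)$ such that
\[
Ly = \left(\frac{d}{dx} + P(x)\right)\!\left(\frac{d}{dx} + Q(x)\right)y.
\]
Expanding the right-hand side gives $y_{(2)} + (P+Q)\,y_{(1)} + (Q' + PQ)\,y$, so matching this against $y_{(2)} + B\,y_{(1)} + C\,y$ forces the two conditions $P+Q = B$ and $Q' + PQ = C$. Eliminating $P$ through $P = B - Q$ (which is Eqn.~\eqref{5}) and substituting into the second condition produces exactly the Riccati equation $Q' = Q^2 - BQ + C$ of Eqn.~\eqref{6}. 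Thus, once a solution $Q$ of the Riccati equation is available, the factorization holds identically.

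With the factorization established, I would reduce Eqn.~\eqref{1} to two successive first-order linear equations. Introducing the intermediate variable $u = y_{(1)} + Q(x)y$, the factored equation becomes $u' + P(x)u = R(x)$, which is first order and linear. I would solve it with the integrating factor $g(x) = e^{\int P\,dx}$ of Eqn.~\eqref{3}: multiplying through gives $(g u)' = gR$, and one integration yields $u = \frac{1}{g}\int gR\,dx + \frac{C_1}{g}$, with $C_1$ the first constant of integration.

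Next I would unwind the substitution. The defining relation $u = y_{(1)} + Q(x)y$ is itself a first-order linear equation for $y$, now with the known right-hand side $u$ just computed. Using the second integrating factor $h(x) = e^{\int Q\,dx}$ of Eqn.~\eqref{4}, I would write $(h y)' = h u$, integrate once more, and collect the resulting terms. Substituting the expression for $u$ and dividing by $h$ then reproduces precisely the three-term formula of Eqn.~\eqref{2}, with $C_1$ and $C_2$ appearing as the constants of the two integrations.

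The manipulations in these last two steps are routine integrations of first-order linear equations, so the genuine content of the theorem is concentrated in the first step, where existence of the factorization is equivalent to solvability of the Riccati equation~\eqref{6}. Accordingly, the main obstacle is not in the verification but in the hypotheses: the formula is explicit only insofar as a particular solution $Q$ of the nonlinear equation~\eqref{6} can actually be found, and I would be careful to record that the integrating factors $g$ and $h$ are well defined precisely once such a $Q$ has been fixed.
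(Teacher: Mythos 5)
Your proposal is correct and is essentially the paper's own argument: the factorization $\left(\frac{d}{dx}+P\right)\left(\frac{d}{dx}+Q\right)y$ with the matching conditions $P+Q=B$ and $Q'+PQ=C$ is exactly what the paper encodes through its two integrating-factor exactness conditions $g'=gP$, $(gQ)'=gC$, $h'=hQ$, and your intermediate variable $u=y_{(1)}+Qy$ is precisely the quantity the paper obtains after its first integration. The two successive first-order integrations and the resulting three-term formula coincide term for term with Eqns.~\eqref{14}--\eqref{19}.
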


	\begin{proof}
		Consider the differential equation given by Eqn.~\eqref{1},
		\begin{equation*}
		y_{(2)} + B(x)y_{(1)} + C(x)y = R(x).
		\end{equation*}		
		Let		
		\begin{equation}
		 B(x) = P(x) + Q(x),
		 \label{7}
		\end{equation}
		where $P(x)$ and $Q(x)$ are the functions of the independent variable $x$. Then Eqn.~\eqref{1} becomes
		\begin{equation}
		y_{(2)} + P(x)y_{(1)} + Q(x)y_{(1)} + C(x)y = R(x).
		\label{8}
		\end{equation}
		We multiply Eqn.~\eqref{8} by a function $g(x)$, which we call here as the first Integrating Factor, to get 
		\begin{equation}
		(g(x)y_{(2)}+g(x)P(x)y_{(1)}) + (g(x)Q(x)y_{(1)} + g(x)C(x)y) = g(x)R(x).
		\label{9}
		\end{equation}
		We impose the following conditions on $g(x)$,
		\begin{gather}
		g'(x) = g(x)P(x), \label{10} \\
		(g(x)Q(x))' = g(x)C(x), \label{11}
		\end{gather}
		such that Eqn.~\eqref{9} becomes
		\begin{equation}
		\frac{d}{dx}(g(x)y_{(1)}) + \frac{d}{dx}(g(x)Q(x)y) = g(x)R(x).
		\label{12}
		\end{equation}
		On integrating Eqn.~\eqref{12} we get
		\begin{gather}
		g(x)y_{(1)} + g(x)y = \!\! \int \!\! g(x)R(x) \, dx + C_1 \label{13} \\
		\implies y_{(1)} + Q(x)y = \frac{1}{g(x)} \!\! \int \!\! g(x)R(x) \, dx + \frac{C_1}{g(x)} \label{14}
		\end{gather}
		Now, we multiply Eqn.~\eqref{14} with another function $h(x)$, which we call here as the second Integrating Factor, to get
		\begin{equation}
		h(x)y_{(1)} + h(x)Q(x)y = \frac{h(x)}{g(x)}\!\!\int \!\! g(x)R(x)\,dx + C_1\frac{h(x)}{g(x)}.
		\label{15}
		\end{equation}
		We impose the following condition on $h(x)$,
		\begin{equation}
		h'(x) = h(x)Q(x),
		\label{16}
		\end{equation}
		such that Eqn.~\eqref{15} transforms into 
		\begin{equation}
		\frac{d}{dx}(h(x)y) = \frac{h(x)}{g(x)} \!\! \int \!\! g(x)R(x) \, dx + C_1\frac{h(x)}{g(x)}.
		\label{17}
		\end{equation}
		On integrating Eqn.~\eqref{17}, we get
		\begin{gather}
		h(x)y =\!\!\int\!\!\frac{h(x)}{g(x)}\!\!\int\!\!g(x)R(x) \,dxdx + C_1\!\!\int \!\! \frac{h(x)}{g(x)} \, dx + C_2 \label{18} \\
		\implies y = \frac{1}{h(x)}\!\!\int\!\!\frac{h(x)}{g(x)}\!\!\int\!\!g(x)R(x) \,dxdx + \frac{C_1}{h(x)}\!\!\int \!\! \frac{h(x)}{g(x)} \, dx + \frac{C_2}{h(x)}, \label{19}
		\end{gather}
		such that Eqn.~\eqref{19} gives the solution for the differential equation given by Eqn.~\eqref{1} where $g(x)$ and $h(x)$ are determined using conditions given in Eqn.~\eqref{10}, Eqn.~\eqref{11} and Eqn.~\eqref{16}.
		
		The conditions described in Eqn.~\eqref{10}, Eqn.~\eqref{11} and Eqn.~\eqref{16} can also be expressed as
		\begin{gather}
		g'(x) = g(x)P(x) \implies g(x) = e^{\int P(x)\,dx} \label{20} \\
		h'(x) = h(x)Q(x) \implies h(x) = e^{\int Q(x)\,dx} \label{21} \\
		\begin{split}
		(g(x)Q(x))' &= g(x)C(x) \\ \implies g'(x)Q(x) + g(x)Q'(x) &= g(x)C(x). \label{22}
		\end{split}
		\end{gather}
		On using Eqn.~\eqref{10} in Eqn.~\eqref{22}, we get
		\begin{gather}
		g(x)P(x)Q(x) + g(x)Q'(x) = g(x)C(x) \label{23} \\
		\implies Q'(x) + P(x)Q(x) = C(x). \label{24}
		\end{gather}
		Also, by using Eqn.~\eqref{7} in Eqn.~\eqref{24}, we can write an equivalent equation as 
		\begin{gather}
		Q'(x) + B(x)Q(x) - {Q(x)}^2 = C(x) \label{25} \\
		\implies Q'(x) = {Q(x)}^2 - B(x)Q(x) + C(x) \label{26}
		\end{gather}
		which is a Riccati Equation.
		
		Thus, the equations \eqref{7}, \eqref{20}, \eqref{21} and \eqref{26} can be used to determine the Integrating Factors $g(x)$ and $h(x)$ such that they also determine the solution expression given by Eqn.~\eqref{19} for the differential equation given by Eqn.~\eqref{1}.
		
		Therefore, the solution of the general second order linear ordinary differential equation \eqref{1} is given by \eqref{19}, i.e.,
		\begin{equation*}
		y = \frac{1}{h(x)}\!\!\int\!\!\frac{h(x)}{g(x)}\!\!\int\!\!g(x)R(x) \,dxdx + \frac{C_1}{h(x)}\!\!\int \!\! \frac{h(x)}{g(x)} \, dx + \frac{C_2}{h(x)},
		\end{equation*}
		where
		\begin{gather*}
		g(x) = e^{\int P(x) \, dx}, \\
		h(x) = e^{\int Q(x) \, dx}, \\
		P(x) + Q(x) = B(x), \\
		Q'(x) = {Q(x)}^2 -B(x)Q(x) + C(x) \, \text{and}
		\end{gather*}
		$g(x)$ and $h(x)$ are the first and second Integrating Factors respectively.
		
	\end{proof}

	It is worthwhile to discuss about the complementary solution part and the particular integral part of the solution of the differential equation \eqref{1} given by Eqn.~\eqref{19}.

	The complementary solution part of a general linear ordinary differential equation is given by the solution of the corresponding homogeneous equation. So, in the context of the differential equation \eqref{1}, we get the corresponding homogeneous equation by putting $R(x)=0$ , i.e.
	\begin{equation}
	y_{(2)} + B(x)y_{(1)} + C(x)y = 0 \label{27}
	\end{equation}
	such that by Theorem~\ref{Thm}, its solution is given as 
	\begin{gather}
	y = \frac{1}{h(x)}\!\!\int\!\!\frac{h(x)}{g(x)}\!\!\int\!\!0 \,dxdx + \frac{C_1}{h(x)}\!\!\int \!\! \frac{h(x)}{g(x)} \, dx + \frac{C_2}{h(x)} \label{28}\\
	\begin{split}
	\implies y &= \frac{(C_0+C_1)}{h(x)}\!\!\int \!\! \frac{h(x)}{g(x)} \, dx + \frac{C_2}{h(x)} \\ &= \frac{C}{h(x)}\!\!\int \!\! \frac{h(x)}{g(x)} \, dx + \frac{C_2}{h(x)} \,(C=C_0+C_1).\label{29}
	\end{split}
	\end{gather}
	From the above observations, we conclude that the complementary solution part of the solution of the differential equation \eqref{1} given by Eqn.~\eqref{19} is the part containing arbitary constants while the term having no arbitary constant gives the particular solution (or the non-homogeneous solution) part of the solution.

	\section[other-results]{Other Results Based on the Main Result}

	In this section we state and prove some other important results related to second order linear ordinary differential equations and based on Theorem~\ref{Thm} in form of corollaries.

	The first two results deal with special cases of the general second order linear ordinary differential equation having variable coefficients where the coefficients satisfy some condition. The two results are described here as follows
	
	\begin{corollary}
		\label{Col1}
		If the coefficients of the second order linear ordinary differential equation \eqref{1} satisfy the condition
		\begin{equation}
		{B(x)}^2 + 2B'(x) -4C(x) = 0,
		\label{30}
		\end{equation}
		then its solution is given as
		\begin{equation} 
		\begin{split}
		y=(x+c)e^{-\!\!\int\!\!\frac{B(x)}{2}\,dx}\!\!\int\!\!\frac{1}{(x+c)^2}\!\!\int\!\!(x+c)e^{\!\!\int\!\!\frac{B(x)}{2}\,dx}R(x)\,dxdx +\\+ C_1 e^{-\!\!\int\!\!\frac{B(x)}{2}} + C_2xe^{-\!\!\int\!\!\frac{B(x)}{2}\,dx},
		\label{31}
		\end{split}
		\end{equation}
		where c is any constant.
	\end{corollary}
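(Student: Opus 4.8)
The plan is to invoke Theorem~\ref{Thm} directly, so the entire task reduces to solving the associated Riccati equation \eqref{26} under the hypothesis \eqref{30}. First I would rewrite the condition \eqref{30} as $C(x) = \tfrac{1}{4}B(x)^2 + \tfrac{1}{2}B'(x)$, which is precisely the shape that makes a simple ansatz for $Q(x)$ succeed.

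My guess is $Q_0(x) = \tfrac{1}{2}B(x)$. Substituting into the right-hand side of \eqref{26} gives $Q_0^2 - B Q_0 + C = -\tfrac{1}{4}B^2 + C = \tfrac{1}{2}B'$, which equals $Q_0' = \tfrac{1}{2}B'$; hence $Q_0$ is a particular solution. To recover the full one-parameter family of solutions (and thereby the constant $c$ appearing in \eqref{31}), I would use the standard Riccati reduction $Q = Q_0 + 1/v$. Because $B - 2Q_0 = 0$, the linear equation for $v$ collapses to $v' = -1$, so $v = k - x$ and therefore
\begin{equation*}
Q(x) = \frac{B(x)}{2} - \frac{1}{x+c},
\end{equation*}
with $c$ an arbitrary constant. (One can alternatively verify that this $Q$ satisfies \eqref{26} by direct differentiation.)

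With $Q$ in hand, the remaining steps are mechanical. From \eqref{7} I get $P(x) = B(x) - Q(x) = \tfrac{1}{2}B(x) + \tfrac{1}{x+c}$, and then \eqref{20}--\eqref{21} yield the integrating factors
\begin{equation*}
g(x) = (x+c)\,e^{\int \frac{B(x)}{2}\,dx}, \qquad h(x) = \frac{1}{x+c}\,e^{\int \frac{B(x)}{2}\,dx},
\end{equation*}
so that $h/g = (x+c)^{-2}$ and $1/h = (x+c)\,e^{-\int \frac{B(x)}{2}\,dx}$. Substituting these into the master formula \eqref{19} reproduces the double-integral term of \eqref{31} at once, while $\int h/g\,dx = -(x+c)^{-1}$ turns the two homogeneous terms into a combination of $e^{-\int \frac{B(x)}{2}\,dx}$ and $(x+c)\,e^{-\int \frac{B(x)}{2}\,dx}$; absorbing the constant $c$ into the arbitrary constants renames these as $C_1 e^{-\int \frac{B(x)}{2}\,dx}$ and $C_2\, x\, e^{-\int \frac{B(x)}{2}\,dx}$, giving exactly \eqref{31}.

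The only genuine insight is the ansatz $Q_0 = \tfrac{1}{2}B$, and the condition \eqref{30} is engineered precisely so that it works; once that is spotted, no real obstacle remains. The mildly delicate point is bookkeeping the integration constants in the final step so that the homogeneous part matches \eqref{31} verbatim rather than merely up to relabeling, but this is routine.
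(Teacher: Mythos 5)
Your proposal is correct and follows essentially the same route as the paper: the paper substitutes $Q=\tfrac{B}{2}+f$ into the Riccati equation, uses the hypothesis to reduce it to $f'=f^2$, and solves by separation to get $f=-1/(x+c)$, which is exactly your $Q=Q_0+1/v$ reduction in different notation (your $1/v$ is the paper's $f$). The resulting $P$, $Q$, integrating factors $g(x)=(x+c)e^{\int B/2\,dx}$, $h(x)=\tfrac{1}{x+c}e^{\int B/2\,dx}$, and the final constant-relabeling step all coincide with the paper's.
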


	\begin{proof}
		Consider the general second order linear ordinary differential equation given by Eqn.~\eqref{1},
		\begin{equation}
		y_{(2)} + B(x)y_{(1)} + C(x)y = R(x).
		\label{32}
		\end{equation}
		By Theorem~\ref{Thm}, this equation has solution given as
		\begin{equation}
		y = \frac{1}{h(x)}\!\!\int\!\!\frac{h(x)}{g(x)}\!\!\int\!\!g(x)R(x) \,dxdx + \frac{C_1}{h(x)}\!\!\int \!\! \frac{h(x)}{g(x)} \, dx + \frac{C_2}{h(x)},
		\label{33}
		\end{equation}
		where
		\begin{gather}
		g(x) = e^{\int P(x) \, dx}, \label{34} \\ 
		h(x) = e^{\int Q(x) \, dx}, \label{35} \\
		P(x) + Q(x) = B(x), \label{36} \\
		Q'(x) = {Q(x)}^2 -B(x)Q(x) + C(x) \, \label{37} \text{and}
		\end{gather}
		$g(x)$ and $h(x)$ are the first and second Integrating Factors respectively.\\ \\
		Let
		\begin{gather}
		P(x) = \frac{B(x)}{2} - f(x) \label{38} \\
		\implies Q(x) = \frac{B(x)}{2} + f(x). \label{39}
		\end{gather}
		Then Eqn.~\eqref{37} becomes
		\begin{gather}
		\begin{split}
		\frac{B'(x)}{2} + f'(x) &= \frac{{B(x)}^2}{4} + {f(x)}^2\\ &+ B(x)f(x) - \frac{{B(x)}^2}{2} - B(x)f(x) + C(x) \label{40}
		\end{split}\\
		\implies f'(x) = {f(x)}^2 - \frac{{B(x)}^2 + 2B'(x) -4C(x)}{4}. \label{41}
		\end{gather}
		If the coefficients $B(x)$ and $C(x)$ satisfy the equation 
		\begin{equation}
		{B(x)}^2 + 2B'(x) -4C(x) = 0,
		\label{42}
		\end{equation}
		then Eqn.~\eqref{41} becomes
		\begin{equation}
		f'(x) = {f(x)}^2,
		\label{43}
		\end{equation}
		which has solution expressed as 
		\begin{equation}
		f(x) = -\frac{1}{(x+c)} \, \text{($c$ is an arbitary constant)}
		\label{44}
		\end{equation}
		On using $f(x)$ in Eqn.~\eqref{38} and Eqn.~\eqref{39}, we get
		\begin{gather}
		P(x) = \frac{B(x)}{2} + \frac{1}{(x+c)} \label{45} \\
		Q(x) = \frac{B(x)}{2} - \frac{1}{(x+c)} \label{46}
		\end{gather}
		Then on using values of $P(x)$ and $Q(x)$ in Eqn.~\eqref{34} and Eqn.~\eqref{35} respectively, we get the Integrating factors as
		\begin{gather}
		g(x) = (x+c)e^{\int\frac{B(x)}{2}\, dx} \label{47} \\
		h(x) = \frac{1}{(x+c)}e^{\int\frac{B(x)}{2}\, dx}, \label{48}
		\end{gather}
		such that the solution expression becomes
		\begin{gather}
		\begin{split}
		y=(x+c)e^{-\!\!\int\!\!\frac{B(x)}{2}\,dx}\!\!\int\!\!\frac{1}{(x+c)^2}\!\!\int\!\!(x+c)e^{\!\!\int\!\!\frac{B(x)}{2}\,dx}R(x)\,dxdx +\\+ C_1(x+c) e^{-\!\!\int\!\!\frac{B(x)}{2}}\!\!\int\!\!\frac{1}{{(x+c)}^2}\,dx + C_2(x+c)e^{-\!\!\int\!\!\frac{B(x)}{2}\,dx},
		\label{49}
		\end{split}\\
		\begin{split}
		\implies y=(x+c)e^{-\!\!\int\!\!\frac{B(x)}{2}\,dx}\!\!\int\!\!\frac{1}{(x+c)^2}\!\!\int\!\!(x+c)e^{\!\!\int\!\!\frac{B(x)}{2}\,dx}R(x)\,dxdx +\\+ C_0 e^{-\!\!\int\!\!\frac{B(x)}{2}} + C_2xe^{-\!\!\int\!\!\frac{B(x)}{2}\,dx}\,(C_0 = C_2 c-C_1),
		\label{50}
		\end{split}
		\end{gather}
		where Eqn.~\eqref{50} gives the required solution expression for the differential equation \eqref{32} when the condition given by Eqn.~\eqref{42} is true.
	\end{proof}
	
	\begin{corollary}
		\label{{Col2}}
		If the coefficients of the second order linear ordinary differential equation \eqref{1} satisfy the condition
		\begin{equation}
		{B(x)}^2 + 2B'(x) -4C(x) = k\, \text{($k$ is a constant, $k \neq 0$)},
		\label{51}
		\end{equation}
		then its solution is given as
		\begin{small}
		\begin{equation} 
		\begin{split}
		&y \! = \! e^{-\!\! \int \!\! \frac{B(x)}{2}\,dx}(e^{\sqrt{k}(x+c)}\!\!\!-\!e^{-\sqrt{k}(x+c)}) \\
		&\times \int \!\! \frac{1}{{(e^{\sqrt{k}(x+c)}\!\!\!-\!e^{-\sqrt{k}(x+c)})}^2} \!\! \int \!\! e^{\int \frac{B(x)}{2}\,dx} (e^{\sqrt{k}(x+c)}\!\!\!-\!e^{-\sqrt{k}(x+c)}) R(x) \,dxdx \\
		&+C_1e^{-(\int\frac{B(x)}{2}\,dx + \sqrt{k}(x+c))} + C_2e^{(-\int\frac{B(x)}{2}\,dx + \sqrt{k}(x+c))}
		\label{52}
		\end{split}
		\end{equation}
		\end{small}
		where c is any constant.
	\end{corollary}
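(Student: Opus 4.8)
The plan is to follow the template of Corollary~\ref{Col1} verbatim, changing only the constant on the right-hand side of the reduced equation. Starting from the solution formula of Theorem~\ref{Thm}, I would again set $P(x) = \frac{B(x)}{2} - f(x)$ and $Q(x) = \frac{B(x)}{2} + f(x)$ as in~\eqref{38}--\eqref{39}, so that the Riccati equation~\eqref{37} collapses, exactly as in~\eqref{41}, to $f'(x) = f(x)^2 - \frac{B(x)^2 + 2B'(x) - 4C(x)}{4}$. Imposing the hypothesis $B(x)^2 + 2B'(x) - 4C(x) = k$ with $k \neq 0$ then turns this into the autonomous equation $f'(x) = f(x)^2 - \frac{k}{4}$, which replaces the pure $f'=f^2$ of the previous corollary.

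Next I would solve this by separation of variables. Since $k \neq 0$, the quadratic factors (after fixing a branch of $\sqrt{k}$) as $(f - \frac{\sqrt{k}}{2})(f + \frac{\sqrt{k}}{2})$, so a partial-fraction decomposition of $\frac{df}{f^2 - k/4}$ integrates to a logarithm, and solving the resulting relation for $f(x)$ yields a hyperbolic-cotangent-type solution carrying a single integration constant written as $c$; its integral introduces precisely the combination $e^{\sqrt{k}(x+c)} - e^{-\sqrt{k}(x+c)}$ that appears in~\eqref{52}. This is the step that genuinely differs from Corollary~\ref{Col1}: there the zero right-hand side gave the rational $f = -1/(x+c)$, whereas here the nonzero constant forces the exponential/hyperbolic form. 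A useful sanity check is the limit $k \to 0$, in which the difference of exponentials degenerates back to the linear factor $x+c$ of Corollary~\ref{Col1}, recovering Eqn.~\eqref{50}.

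With $f(x)$ in hand I would recover $P(x)$ and $Q(x)$ and form the integrating factors $g(x) = e^{\int P\,dx}$ and $h(x) = e^{\int Q\,dx}$ from~\eqref{34}--\eqref{35}; integrating the cotangent term contributes a $\log(e^{\sqrt{k}(x+c)} - e^{-\sqrt{k}(x+c)})$, so that $g$ and $h$ come out proportional to $e^{\int \frac{B}{2}\,dx}\,(e^{\sqrt{k}(x+c)} - e^{-\sqrt{k}(x+c)})^{\pm 1}$. Substituting these into the master formula~\eqref{33} and simplifying the ratio $h/g$ together with the prefactor $1/h$ should then produce~\eqref{52}, with the two homogeneous terms collapsing to $C_1 e^{-(\int \frac{B}{2}\,dx + \sqrt{k}(x+c))}$ and $C_2 e^{(-\int \frac{B}{2}\,dx + \sqrt{k}(x+c))}$. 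I expect the only real obstacle to be computational: integrating the separable nonlinear equation for $f$ with the correct partial fractions and branch of $\sqrt{k}$, and then carrying the hyperbolic/exponential integrating factors cleanly through the nested double integral of Theorem~\ref{Thm} without sign or constant slips.
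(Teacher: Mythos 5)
Your strategy is exactly the paper's: the same substitution $P=\frac{B}{2}-f$, $Q=\frac{B}{2}+f$ reducing the Riccati equation to an autonomous equation for $f$, the same separation-of-variables solution in hyperbolic form, and the same reassembly of $g$, $h$ and the master formula. So in outline there is nothing missing.

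There is, however, a constant that will not come out as you claim. Taking the hypothesis \eqref{51} literally, as you do, equation \eqref{62} reduces to $f'=f^{2}-\tfrac{k}{4}$, whose roots are $\pm\tfrac{\sqrt{k}}{2}$; separating variables then yields $f(x)=-\tfrac{\sqrt{k}}{2}\,\frac{e^{(\sqrt{k}/2)(x+c)}+e^{-(\sqrt{k}/2)(x+c)}}{e^{(\sqrt{k}/2)(x+c)}-e^{-(\sqrt{k}/2)(x+c)}}$, and every exponent in the resulting integrating factors and in the final solution carries $\tfrac{\sqrt{k}}{2}(x+c)$, not $\sqrt{k}(x+c)$. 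So your derivation produces a correct solution formula, but it is \eqref{52} with $\sqrt{k}$ replaced by $\sqrt{k}/2$ throughout, contrary to your assertion that the combination $e^{\sqrt{k}(x+c)}-e^{-\sqrt{k}(x+c)}$ appears ``precisely.'' The paper avoids this by quietly using a different normalization in its proof: its equation \eqref{63} imposes $\tfrac{B^{2}+2B'-4C}{4}=k$ (i.e.\ $B^{2}+2B'-4C=4k$), so that \eqref{62} becomes $f'=f^{2}-k$ and the exponents are $\pm\sqrt{k}(x+c)$ as displayed. In other words, the statement \eqref{51} and the displayed solution \eqref{52} are mutually inconsistent by this factor of four, and your proposal inherits one side of that inconsistency while claiming to land on the other. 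The fix is purely cosmetic --- either rescale $k$ in the hypothesis or halve the exponents in the conclusion --- but as written the final step of your argument does not deliver the stated formula. Your $k\to 0$ degeneration check to Corollary~\ref{Col1} is sound either way, since it is insensitive to this rescaling.
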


	\begin{proof}
		Consider the general second order linear ordinary differential equation given by Eqn.~\eqref{1},
		\begin{equation}
		y_{(2)} + B(x)y_{(1)} + C(x)y = R(x).
		\label{53}
		\end{equation}
		By Theorem~\ref{Thm}, this equation has solution given as
		\begin{equation}
		y = \frac{1}{h(x)}\!\!\int\!\!\frac{h(x)}{g(x)}\!\!\int\!\!g(x)R(x) \,dxdx + \frac{C_1}{h(x)}\!\!\int \!\! \frac{h(x)}{g(x)} \, dx + \frac{C_2}{h(x)},
		\label{54}
		\end{equation}
		where
		\begin{gather}
		g(x) = e^{\int P(x) \, dx}, \label{55} \\ 
		h(x) = e^{\int Q(x) \, dx}, \label{56} \\
		P(x) + Q(x) = B(x), \label{57} \\
		Q'(x) = {Q(x)}^2 -B(x)Q(x) + C(x) \, \label{58} \text{and}
		\end{gather}
		$g(x)$ and $h(x)$ are the first and second Integrating Factors respectively.\\ \\
		Let
		\begin{gather}
		P(x) = \frac{B(x)}{2} - f(x) \label{59} \\
		\implies Q(x) = \frac{B(x)}{2} + f(x). \label{60}
		\end{gather}
		Then Eqn.~\eqref{58} becomes
		\begin{gather}
		\begin{split}
		\frac{B'(x)}{2} + f'(x) &= \frac{{B(x)}^2}{4} + {f(x)}^2\\ &+ B(x)f(x) - \frac{{B(x)}^2}{2} - B(x)f(x) + C(x) \label{61}
		\end{split}\\
		\implies f'(x) = {f(x)}^2 - \frac{{B(x)}^2 + 2B'(x) -4C(x)}{4}. \label{62}
		\end{gather}
		If the coefficients $B(x)$ and $C(x)$ satisfy the equation
		\begin{equation}
		\frac{{B(x)}^2 + 2B'(x) -4C(x)}{4} = k\, \text{($k$ is a constant, $k \neq 0$)},
		\label{63}
		\end{equation}
		then Eqn.~\eqref{62} becomes
		\begin{equation}
		f'(x) = {f(x)}^2 - k .
		\label{64}
		\end{equation}
		The solution of the above equation is given as
		\begin{equation}
		f(x) = -\sqrt{k}\left(\frac{e^{\sqrt{k}(x+c)} + e^{-\sqrt{k}(x+c)}}{e^{\sqrt{k}(x+c)} - e^{-\sqrt{k}(x+c)}}\right),
		\label{65}
		\end{equation}
		where $c$ is an arbitary constant.
		
		On using $f(x)$ in Eqn.~\eqref{59} and Eqn.~\eqref{60}, we obtain
		\begin{gather}
		P(x) = \frac{B(x)}{2} + \sqrt{k}\left(\frac{e^{\sqrt{k}(x+c)} + e^{-\sqrt{k}(x+c)}}{e^{\sqrt{k}(x+c)} - e^{-\sqrt{k}(x+c)}}\right), \label{66} \\
		Q(x) = \frac{B(x)}{2} - \sqrt{k}\left(\frac{e^{\sqrt{k}(x+c)} + e^{-\sqrt{k}(x+c)}}{e^{\sqrt{k}(x+c)} - e^{-\sqrt{k}(x+c)}}\right). \label{67}
		\end{gather}
		Then on using $P(x)$ and $Q(x)$ in equations \eqref{55} and \eqref{56} respectively, we obtain the Integrating Factors $g(x)$ and $h(x)$ as 
		\begin{gather}
		\begin{split}
		g(x) = e^{\int P(x) \, dx} &= e^{\int \frac{B(x)}{2} \, dx}e^{\int \sqrt{k}\left(\frac{e^{\sqrt{k}(x+c)} + e^{-\sqrt{k}(x+c)}}{e^{\sqrt{k}(x+c)} - e^{-\sqrt{k}(x+c)}}\right)\,dx} \\
		&= e^{\int \frac{B(x)}{2} \, dx}\left(e^{\sqrt{k}(x+c)} - e^{-\sqrt{k}(x+c)}\right),
		\label{68}
		\end{split} \\
		\begin{split}
		h(x) = e^{\int Q(x) \, dx} &= e^{\int \frac{B(x)}{2} \, dx}e^{-\int \sqrt{k}\left(\frac{e^{\sqrt{k}(x+c)} + e^{-\sqrt{k}(x+c)}}{e^{\sqrt{k}(x+c)} - e^{-\sqrt{k}(x+c)}}\right)\,dx} \\
		&= e^{\int \frac{B(x)}{2} \, dx}\frac{1}{\left(e^{\sqrt{k}(x+c)} - e^{-\sqrt{k}(x+c)}\right)}.
		\label{69}
		\end{split}
		\end{gather}
		On using $g(x)$ and $h(x)$ in Eqn.~\eqref{54}, the solution expression becomes
		\begin{small}
		\begin{gather} 
		\begin{split}
		&y \!= \! e^{-\!\! \int \!\! \frac{B(x)}{2}\,dx}(e^{\sqrt{k}(x+c)}\!\!\!-\!e^{-\sqrt{k}(x+c)})\\
		&\times\int \!\! \frac{1}{{(e^{\sqrt{k}(x+c)}\!\!\!-\!e^{-\sqrt{k}(x+c)})}^2} \!\! \int \!\! e^{\int \frac{B(x)}{2}\,dx} (e^{\sqrt{k}(x+c)}\!\!\!-\!e^{-\sqrt{k}(x+c)}) R(x) \,dxdx \\
		&+C_1 e^{-\int \frac{B(x)}{2} \, dx}\left(e^{\sqrt{k}(x+c)} - e^{-\sqrt{k}(x+c)}\right) \!\! \int \!\! \frac{1}{{\left(e^{\sqrt{k}(x+c)} - e^{-\sqrt{k}(x+c)}\right)}^2} \, dx \\
		&+ C_2e^{-\int \frac{B(x)}{2} \, dx}\left(e^{\sqrt{k}(x+c)} - e^{-\sqrt{k}(x+c)}\right) 
		\label{70}
		\end{split} \\
		\nonumber \\ 
		\nonumber \\
		\begin{split}
		&\implies \!y \!=\! e^{-\!\! \int \!\! \frac{B(x)}{2}\,dx}(e^{\sqrt{k}(x+c)}\!\!\!-\!e^{-\sqrt{k}(x+c)})\\
		&\times\int \!\! \frac{1}{{(e^{\sqrt{k}(x+c)}\!\!\!-\!e^{-\sqrt{k}(x+c)})}^2} \!\! \int \!\! e^{\int \frac{B(x)}{2}\,dx} (e^{\sqrt{k}(x+c)}\!\!\!-\!e^{-\sqrt{k}(x+c)}) R(x) \,dxdx \\
		&+C_0e^{-(\int\frac{B(x)}{2}\,dx + \sqrt{k}(x+c))}\\
		&+ C_2e^{(-\int\frac{B(x)}{2}\,dx + \sqrt{k}(x+c))} \\
		&\,\,(C_0 = -\frac{C_1}{2\sqrt{k}}-C_2)
		\label{71}
		\end{split}
		\end{gather}
		\end{small}
		such that Eqn.~\eqref{71} gives the required solution expression for the differential equation \eqref{53} when the condition given by Eqn.~\eqref{63} is true.
	\end{proof}

	The third result deals with second order linear ordinary differential equation having constant coefficients. In this case exact solution is completely determined. The result is stated in form of a corollary of Theorem \ref{Thm} presented here along with the proof as follows
	
	\begin{corollary}
		If the coefficients of the second order linear ordinary differential equation \eqref{1} are constant such that $B(x)=B$ and $C(x)=C$ then its solution is given by
		\begin{equation}
		y = \frac{1}{h(x)}\!\!\int\!\!\frac{h(x)}{g(x)}\!\!\int\!\! g(x)R(x)\, dx dx + \frac{C_1}{h(x)}\!\!\int\!\!\frac{h(x)}{g(x)}\,dx + \frac{C_2}{h(x)}
		\label{72}
		\end{equation}
		where
		\begin{gather}
		g(x) = e^{\left(\frac{B}{2} \pm \frac{\sqrt{B^2-4C}}{2}\right)x}, \label{73} \\
		h(x) = e^{\left(\frac{B}{2} \mp \frac{\sqrt{B^2-4C}}{2}\right)x}. \label{74}
		\end{gather}
		\label{Col3}
	\end{corollary}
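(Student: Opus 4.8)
The plan is to invoke Theorem~\ref{Thm} directly and to exploit the fact that, in the constant-coefficient case, the Riccati equation \eqref{6} admits a \emph{constant} solution. Setting $B(x)=B$ and $C(x)=C$ throughout, the defining relation \eqref{6} becomes
\begin{equation*}
Q'(x) = {Q(x)}^2 - BQ(x) + C.
\end{equation*}
Since the right-hand side no longer depends explicitly on $x$, the equation is autonomous, and it is natural to seek an equilibrium solution $Q(x)=q$ with $Q'(x)=0$.

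First I would impose $Q'(x)=0$, which collapses the Riccati equation into the purely algebraic quadratic
\begin{equation*}
q^2 - Bq + C = 0,
\end{equation*}
whose roots are $q = \frac{B}{2} \mp \frac{\sqrt{B^2-4C}}{2}$. Taking the constant $Q(x) = \frac{B}{2} \mp \frac{\sqrt{B^2-4C}}{2}$ and recovering $P(x)$ from \eqref{5} via $P(x) = B - Q(x) = \frac{B}{2} \pm \frac{\sqrt{B^2-4C}}{2}$, the two integrating factors follow at once from \eqref{3} and \eqref{4}: because $P$ and $Q$ are now constants, their integrals are linear in $x$, so
\begin{equation*}
g(x) = e^{\int P\,dx} = e^{\left(\frac{B}{2} \pm \frac{\sqrt{B^2-4C}}{2}\right)x}, \qquad h(x) = e^{\int Q\,dx} = e^{\left(\frac{B}{2} \mp \frac{\sqrt{B^2-4C}}{2}\right)x},
\end{equation*}
which are precisely \eqref{73} and \eqref{74}. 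Substituting these into the solution formula \eqref{2} of Theorem~\ref{Thm} then immediately yields \eqref{72}.

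The reason this case simplifies so cleanly is that the genuine difficulty in Theorem~\ref{Thm} lies in solving the Riccati equation \eqref{6} for $Q(x)$, which has no elementary solution for arbitrary variable coefficients; with constant coefficients, the autonomous structure guarantees a constant equilibrium, so no integration is required to find $Q$. The only point worth checking is the internal consistency of the sign conventions: the paired $\pm$ and $\mp$ appearing in $g$ and $h$ correspond to the two roots of the \emph{same} quadratic, and one verifies $P+Q=B$ directly since the radical terms cancel. Because the solution formula \eqref{2} depends on $g$ and $h$ only through the ratio $h/g$, either assignment of the two roots produces the same family of solutions. I do not anticipate any real obstacle here, as the step that is hard in the general theorem is exactly the step that trivializes under the constant-coefficient hypothesis.
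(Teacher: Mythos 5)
Your proposal is correct and follows essentially the same route as the paper: the paper likewise takes $P$ and $Q$ to be constants, reducing \eqref{79} and \eqref{80} to $P+Q=B$ and $PQ=C$ (the Vieta form of your quadratic $q^2-Bq+C=0$), and then reads off the same integrating factors. The only cosmetic difference is that you solve the quadratic for $Q$ directly and set $P=B-Q$, while the paper solves the symmetric pair of equations; both yield \eqref{73}--\eqref{74} identically.
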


	\begin{proof}
		Consider the general second order linear ordinary differential equation given by Eqn.~\eqref{1} such that $B(x)$ and $C(x)$ are constant i.e. $B(x)=B$ and $C(x)=C$,
		\begin{equation}
		y_{(2)} + By_{(1)} + Cy = R(x).
		\label{75}
		\end{equation}
		By Theorem~\ref{Thm}, this equation has solution given as
		\begin{equation}
		y = \frac{1}{h(x)}\!\!\int\!\!\frac{h(x)}{g(x)}\!\!\int\!\!g(x)R(x) \,dxdx + \frac{C_1}{h(x)}\!\!\int \!\! \frac{h(x)}{g(x)} \, dx + \frac{C_2}{h(x)},
		\label{76}
		\end{equation}
		where
		\begin{gather}
		g(x) = e^{\int P(x) \, dx}, \label{77} \\ 
		h(x) = e^{\int Q(x) \, dx}, \label{78} \\
		P(x) + Q(x) = B(x), \label{79} \\
		Q'(x) = {Q(x)}^2 -B(x)Q(x) + C(x) \, \label{80} \text{and}
		\end{gather}
		$g(x)$ and $h(x)$ are the first and second Integrating Factors respectively.\\ \\
		Since the coefficients $B(x)$ and $C(x)$ are constants $(B(x)=B, C(x)=C)$, we can consider $P(x)$ and $Q(x)$ to be constants too such that $P(x)=P$, $Q(x)=Q$ and 
		\begin{equation}
		P'(x) = Q'(x) = 0.
		\label{81}
		\end{equation}
		Then Eqn.~\eqref{79} and Eqn.~\eqref{80} respectively transform into
		\begin{gather}
		P+Q=B, \label{82}\\
		PQ=C. \label{83}
		\end{gather}
		On solving Eqn.~\eqref{82} and Eqn.~\eqref{83}, we obtain
		\begin{gather}
		P = \frac{B}{2} \pm \frac{\sqrt{B^2-4C}}{2}, \label{84} \\
		Q = \frac{B}{2} \mp \frac{\sqrt{B^2-4C}}{2}. \label{85}
		\end{gather}
		Then we can express the Integrating Factors $g(x)$ and $h(x)$ as
		\begin{gather}
		g(x) = e^{\int P(x) \,dx} = e^{\int P \,dx} = e^{\left(\frac{B}{2} \pm \frac{\sqrt{B^2-4C}}{2}\right)x}, \label{86} \\
		h(x) = e^{\int Q(x) \,dx} = e^{\int Q \,dx} = e^{\left(\frac{B}{2} \mp \frac{\sqrt{B^2-4C}}{2}\right)x}, \label{87}
		\end{gather}
		such that on using the Integrating factors given by the above equations in the solution expression given by Eqn.~\eqref{76}, the complete  solution of the required differential equation \eqref{75} is determined.
	\end{proof}

	The fourth result deals with the general second order linear ordinary differential equation and gives the formula for the complete solution when one of the complementary solutions is known. The result is given in form of a corollary of Theorem \ref{Thm} presented along with the proof here as follows.
	
	\begin{corollary}
		If one of the complementary solution of the general second order linear ordinary differential equation \eqref{1} is given as $f(x)$, then the solution of the differential equation \eqref{1} is given as
		\begin{equation}
		\begin{split}
		y &= f(x) \!\! \int \!\! \frac{e^{-\int \!\! B(x) \, dx}}{{f(x)}^2} \!\! \int \!\! e^{\int \!\! B(x) \, dx} f(x)R(x) \, dx dx\\ &+ C_1 f(x) \!\! \int \!\! \frac{e^{-\int \!\! B(x) \, dx }}{{f(x)}^2} \, dx + C_2 f(x).
		\label{88}
		\end{split}
 		\end{equation}
 		\label{Col4}
	\end{corollary}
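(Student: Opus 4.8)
The plan is to realize the given complementary solution $f(x)$ as the reciprocal of the second integrating factor $h(x)$ appearing in Theorem~\ref{Thm}, and then to read off the corresponding first integrating factor $g(x)$ and substitute both into the master formula \eqref{2}. First I would recall the observation made just after Theorem~\ref{Thm}: the function $1/h(x)=e^{-\int Q(x)\,dx}$ is always a complementary solution of \eqref{1}. Indeed, writing $y=e^{-\int Q\,dx}$ gives $y_{(1)}=-Q\,y$ and $y_{(2)}=(Q^2-Q')\,y$, so that $y_{(2)}+B\,y_{(1)}+C\,y=(Q^2-BQ+C-Q')\,y$, and this vanishes precisely by the Riccati condition \eqref{6}. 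This is the link I intend to exploit, running the construction in reverse.

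Given the complementary solution $f(x)$, I would therefore set $Q(x)=-f'(x)/f(x)$, equivalently $h(x)=1/f(x)$, and check that this $Q$ satisfies the Riccati equation \eqref{6}. Carrying out the differentiation, the requirement $Q'(x)=Q(x)^2-B(x)Q(x)+C(x)$ collapses, after cancelling the common term $(f')^2/f^2$ and clearing $f$, to the single condition $f''+B(x)f'+C(x)f=0$, which is exactly the hypothesis that $f$ is a complementary solution of \eqref{1}. With $Q$ thus determined, condition \eqref{5} forces
\[
P(x)=B(x)-Q(x)=B(x)+\frac{f'(x)}{f(x)},
\]
and then \eqref{3} gives, using $\int (f'/f)\,dx=\ln f$,
\[
g(x)=e^{\int P(x)\,dx}=f(x)\,e^{\int B(x)\,dx}.
\]

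Finally I would substitute $h=1/f$ and $g=f\,e^{\int B\,dx}$ into \eqref{2}. The three ingredients simplify to $1/h=f$, to $h/g=e^{-\int B\,dx}/f^2$, and to $g R=f\,e^{\int B\,dx}R$; inserting these into the formula of Theorem~\ref{Thm} reproduces \eqref{88} term for term, with the complementary part $C_1 f\!\int (e^{-\int B\,dx}/f^2)\,dx+C_2 f$ and the particular integral as stated. The only genuinely substantive step is the backward verification that $Q=-f'/f$ solves the Riccati equation; this is exactly where the assumption on $f$ is consumed, and it is the hinge of the argument. Everything after it is direct substitution and algebraic simplification of the expression already furnished by Theorem~\ref{Thm}.
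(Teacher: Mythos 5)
Your proposal is correct and follows essentially the same route as the paper: identify $h(x)=1/f(x)$, deduce $g(x)=f(x)e^{\int B(x)\,dx}$ from $g(x)h(x)=e^{\int(P(x)+Q(x))\,dx}=e^{\int B(x)\,dx}$, and substitute into the formula of Theorem~\ref{Thm}. Your explicit verification that $Q(x)=-f'(x)/f(x)$ satisfies the Riccati equation \eqref{6} exactly when $f$ solves the homogeneous equation is a welcome addition -- the paper simply declares $f(x)\equiv C_2/h(x)$ without checking that this choice is consistent with the constraints of the theorem, so your version actually closes a small logical gap in the published argument.
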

	
	\begin{proof}
		Consider the general second order linear ordinary differential equation given by Eqn.~\eqref{1},
		\begin{equation}
		y_{(2)} + B(x)y_{(1)} + C(x)y = R(x).
		\label{89}
		\end{equation}
		By Theorem~\ref{Thm}, this equation has solution given as
		\begin{equation}
		y = \frac{1}{h(x)}\!\!\int\!\!\frac{h(x)}{g(x)}\!\!\int\!\!g(x)R(x) \,dxdx + \frac{C_1}{h(x)}\!\!\int \!\! \frac{h(x)}{g(x)} \, dx + \frac{C_2}{h(x)},
		\label{90}
		\end{equation}
		where
		\begin{gather}
		g(x) = e^{\int P(x) \, dx}, \label{91} \\ 
		h(x) = e^{\int Q(x) \, dx}, \label{92} \\
		P(x) + Q(x) = B(x), \label{93} \\
		Q'(x) = {Q(x)}^2 -B(x)Q(x) + C(x) \, \label{94} \text{and}
		\end{gather}
		$g(x)$ and $h(x)$ are the first and second Integrating Factors respectively.\\ \\
		If we have a function $f(x)$ such that it satisfies the homogeneous equation,
		\begin{equation}
		y_{(2)} + B(x)y_{(1)} + C(x)y = 0,
		\label{95}
		\end{equation}
		corresponding to the differential equation \eqref{89} then we can put $f(x)$ equal to one of the terms of complementary solution in Eqn.~\eqref{90}, i.e.
		\begin{equation}
		f(x) \equiv \frac{C_2}{h(x)}.
		\label{96}
		\end{equation}
		For the ease of solving we have taken the term of complementary solution corresponding to arbitary constant $C_2$ to be equivalent to $f(x)$ such that we can put $C_2=1$ to obtain the function $h(x)$ in terms of $f(x)$.

		On putting $C_2 = 1$, we get
		\begin{equation}
		f(x) = \frac{1}{h(x)} \implies h(x) = \frac{1}{f(x)}.
		\label{97}
		\end{equation}
		Now consider the following equations
		\begin{gather}
		g(x)h(x) = e^{\int (P(x) + Q(x)) \, dx} = e^{\int B(x) \, dx} \label{98} \\
		\implies g(x) = \frac{e^{\int B(x) \, dx}}{h(x)}. \label{99}
		\end{gather}
		Using Eqn.~\eqref{97}, we get
		\begin{equation}
		g(x) = f(x)e^{\int B(x) \, dx}.
		\label{100}
		\end{equation}
		Therefore, on using $g(x)$ and $h(x)$ as given in Eqn.~\eqref{97} and Eqn.~\eqref{100} in the solution expression in Eqn.~\eqref{90}, we get the solution of the second-order linear ordinary differential equation \eqref{89} as
		\begin{equation}
		\begin{split}
		y &= f(x) \!\! \int \!\! \frac{e^{-\int \!\! B(x) \, dx}}{{f(x)}^2} \!\! \int \!\! e^{\int \!\! B(x) \, dx} f(x)R(x) \, dx dx\\ &+ C_1 f(x) \!\! \int \!\! \frac{e^{-\int \!\! B(x) \, dx }}{{f(x)}^2} \, dx + C_2 f(x),
		\label{101}
		\end{split}
		\end{equation}
		when one of the complementary solutions of the differential equation \eqref{89} is given as $f(x)$.
	\end{proof}

	\bibliographystyle{plain}
	\bibliography{main}

\end{document}